\documentclass[11pt]{article}
\usepackage{graphicx}
\usepackage[a4paper, total={7in, 10.5in}]{geometry}
\usepackage{charter}
\usepackage{xcolor}

\usepackage{amsmath}
\usepackage{amsfonts}
\usepackage{amssymb}
\usepackage{amsthm}
\usepackage{hyperref}
\hypersetup{hidelinks}

\usepackage{listings}
\lstset{basicstyle=\scriptsize\ttfamily,breaklines=true}
\lstset{frame=single, captionpos=b, framesep=8pt, aboveskip=12pt, belowskip=6pt}

\newcommand{\CC}{\mathbb C}
\newcommand{\NN}{\mathbb N}
\newcommand{\PP}{\mathbb P}
\newcommand{\QQ}{\mathbb Q}
\newcommand{\RR}{\mathbb R}
\newcommand{\ZZ}{\mathbb Z}

\newcommand{\MB}{\mathcal B}
\newcommand{\MC}{\mathcal C}
\newcommand{\MI}{\mathcal I}
\newcommand{\ML}{\mathcal L}
\newcommand{\MT}{\mathcal T}
\newcommand{\MR}{\mathcal R}

\newcommand{\init}{\text{in}}

\DeclareMathOperator{\Fl}{Fl}
\DeclareMathOperator{\Gr}{Gr}
\DeclareMathOperator{\Dr}{Dr}
\DeclareMathOperator{\conv}{Conv}
\DeclareMathOperator{\trop}{trop}
\DeclareMathOperator{\Trop}{Trop}

\theoremstyle{plain}
\newtheorem{theorem}{Theorem}[section]

\newtheorem{proposition}[theorem]{Proposition}
\newtheorem{conjecture}[theorem]{Conjecture}

\theoremstyle{definition}
\newtheorem{definition}[theorem]{Definition}

\theoremstyle{remark}
\newtheorem{remark}[theorem]{Remark}
\newtheorem{example}[theorem]{Example}

\title{Matching Fields in Macaulay2}
\author{Oliver Clarke}
\date{}

\begin{document}

\maketitle

\begin{abstract}
    This article introduces the package \textit{MatchingFields} for \textit{Macaulay2} and highlights some open problems.
    A matching field is a combinatorial object whose data encodes a candidate toric degeneration of a Grassmannian or partial flag variety of type $A$. Each coherent matching field is associated to a certain maximal cone of the respective tropical variety. The \textit{MatchingFields} package provides methods to construct matching fields along with their rings, ideals, polyhedra and matroids. The package also supplies methods to test whether a matching field is coherent, linkage and gives rise to a toric degeneration.
\end{abstract}

\tableofcontents

\section{Introduction}

A \textit{matching field} comes in two flavours: a Grassmannian $\Gr(k,n)$ matching field is an ordering of the elements of each $k$-subset of $[n] := \{1, \dots, n \}$ and a flag $\Fl(j_1, \dots, j_k; n)$ matching field is a set $\{L_1, \dots, L_k\}$ where $L_i$ is a Grassmannian matching field for $\Gr(j_i, n)$ for each $i \in [k]$.

Grassmannian matching fields were introduced by Sturmfels and Zelevinsky \cite{sturmfels1993maximal} to study the Newton polytope of a product of maximal minors.
In recent work, matching fields are used to parametrise a family of projective toric varieties, which can be thought of as candidates for the special fiber of a toric degeneration of a Grassmannian or flag variety. See  \cite{mohammadi2019matching, clarke2020schubert, clarke2021flag, clarke2022partial}. A matching field is said to be \textit{coherent} if it is induced by a weight matrix $w$.
In this case, the matching field is said to \textit{give rise to a toric degeneration} if the Pl\"ucker forms are a SAGBI basis for the Pl\"ucker algebra with respect to the weight order $w$. Whenever this happens, the image $\widehat w$ of $w$ under the \textit{tropical Stiefel map} \cite{fink2015stiefel} lies in the relative interior of a top-dimensional prime cone of the tropical Grassmannian \cite{speyer2004tropical} or flag variety with respect to the trivial valuation. 

The toric variety associated to a matching field is defined by its ideal,
see Definition~\ref{def: bg mf algebra ideal}, or in terms of the normal fan of the \textit{matching field polytope}, 
see Section~\ref{sec: bg polytopes}. For some families of matching fields, it is known that the matching field polytopes are related by sequences of combinatorial mutations \cite{clarke2021combinatorial, clarke2022combinatorial, clarke2022partial}. The property of a matching field giving rise to a toric degeneration has formulations in terms of the matching field ideal, and properties of its polytope that are invariant under mutation. See Propositions~\ref{prop: toric degen ideals}, \ref{prop: bg toric degen via polytope}, \ref{prop: toric degen via mf polytope volume}, and \ref{prop: toric degen via NO body}

The question of determining which matching fields give rise to toric degenerations is an open problem. For Grassmannians $\Gr(2,n)$ and $\Gr(3,m)$ with $m \in \{6,7,8\}$, it is possible to compute the tropical Grassmannian explicitly. More generally, the use of combinatorial mutations has led to the construction of families of matching fields that give rise to toric degenerations. Examples of toric degenerations also arise from representation theory. For example, the Gelfand-Tsetlin degeneration and Fang-Fourier-Littleman-Vinberg degeneration both have a description in term of matching fields \cite{clarke2022combinatorial}.

In this article, we introduce the package \textit{MatchingFields} for \textit{Macaulay2} \cite{M2}. The package facilitates working with matching fields, their ideals and polytopes, and provides methods for testing whether they are coherent and give rise to toric degenerations. Additionally, the package allows the user to construct: matroid subdivisions; algebraic matroids; and tope fields. We give examples that show how to use the package and provide exposition about techniques used to perform computations. We highlight some open problems about matching fields; for example, the matching field description of the algebraic matroid of the Grassmannian and a tope description of the free resolution of the matching field ideal. See Conjecture~\ref{conj: matching fields algebraic matroids} and Remark~\ref{rmk: free res from tope amalgamations}, respectively.

\medskip
\noindent \textbf{Overview.}
In Section~\ref{sec: background}, we fix our setup for Pl\"ucker algebras, matching field ideals, and polytopes. In Section~\ref{sec: bg plucker} we recall the Pl\"ucker embedding of type-$A$ partial flag varieties into a product of projective spaces and fix our notation for the Pl\"ucker algebra and Pl\"ucker ideal. In Section~\ref{sec: bg MF}, we recall the definition of matching field ideals and algebras. In particular, we recall what it means for a matching field to give rise to a toric degeneration of the partial flag variety. If this happens, then we say \textit{$L$ is toric}, see Definition~\ref{def: bg toric degen}. In Section~\ref{sec: bg polytopes}, we recall the definition of the matching field polytope and Newton-Okounkov body. We prove Proposition~\ref{prop: toric degen via mf polytope volume}, which shows that a coherent matching field is toric if and only if the matching field polytope has maximal volume.

In Section~\ref{sec: M2}, we introduce the package \textit{MatchingFields}.
In Section~\ref{sec: defining MFs}, we show how to construct matching fields  and view their basic properties. In particular, we define the \textit{weight matrix cone} of a matching field, which admits a test for whether a matching field is coherent, see Definition~\ref{def: weight matrix cone} and Proposition~\ref{prop: coherent iff WMC full dim}. In Section~\ref{sec: ideals and rings of MFs}, we construct the ideals and rings associated to matching fields. In particular, we explain how to check directly whether the Pl\"ucker forms are a SAGBI basis using the package \textit{SubalgebraBases} \cite{burr2023subalgebrabases}. In Section~\ref{sec: other functions}, we showcase the other functionality of the package. We explain the construction of: matching field polytopes and Newton-Okounkov bodies; matroid subdivisions of the hypersimplex induced by points in the Dressian; matching field matroids that decompose the algebraic matroid of the Grassmannian; and tope fields and their amalgamations.

\section{Background}\label{sec: background}

In this section, we recall the basic definitions and results about toric degenerations arising from matching fields. Further details can be found in \cite{clarke2022partial}. Our conventions for weighted polynomial rings are as follows. Let $K$ be a field and $Y = K[y_1, \dots, y_n]$ a polynomial ring . A \textit{weight} for $Y$ is a vector $w \in \RR^{n}$. The \textit{weight} of a monomial $cy^u \in Y$ with coefficient $c \in K \backslash \{0\}$ and exponent $u \in \ZZ^n$ is the dot product $w(c y^u) = u \cdot w$ of $u$ and $w$. The \textit{weight} $w(f)$ of a polynomial $f \in Y$ is the minimum weight of a term of $f$. The \textit{initial form} (or \textit{leading terms}) of a polynomial $f = \sum_u c_u y^u$ is the sum of minimum-weight terms of $f$:
\[
\init_w(f) = \sum_{u \cdot w \, = \, w(f)} c_u y^u.
\]
Note that the initial form of a polynomial need not be a monomial. A monomial order $\prec$ is said to \textit{refine} a weight order $w$ if for any polynomial $f$, we have $\init_\prec(f) = \init_\prec(\init_w(f))$.

\subsection{Pl\"ucker algebras}\label{sec: bg plucker}

Throughout, we fix the following setup and define the Pl\"ucker algebra for partial flag varieties of type $A$. Let $R = \CC[x_{i,j} \colon i \in [n-1], \ j \in [n]]$ be a polynomial ring whose variables are arranged into an $(n-1) \times n$ matrix $X = (x_{i,j})$.
Fix an indexing set $J = \{j_1 < \dots < j_k \} \subseteq [n-1]$. 
The partial flag variety $F = \Fl(j_1, \dots, j_k; n)$, as a set, is the collection of chains of vector subspaces of $\CC^n$:
\[
F = 
\{V_1 \subset V_2 \subset \dots \subset V_k \colon V_i \subseteq \CC^n \text{ and } \dim(V_i) = j_i \text{ for all } 1 \le i \le k\}.
\]
If $k = 1$, then $F = \Fl(j_1; n) = \Gr(j_1, n)$ is the Grassmannian of $j_1$-dimensional subspaces of $\CC^n$.
We embed $F$ into a product of projective spaces via the \textit{Pl\"ucker embedding}. 
Explicity, for each chain of vector subspaces $V = (V_1 \subset \dots \subset V_k) \in F$ we fix an $(n-1) \times n$ matrix $M_V$ such that $V_i$ is the row-span of rows $1,2, \dots, j_i$ of $M_V$. We map $M_V$ into
\[
\PP := \PP^{\binom{n}{j_1} - 1} \times \PP^{\binom{n}{j_2} - 1} \times \dots \times \PP^{\binom{n}{j_k} - 1}
\]
as follows. For each $j \in J$ and each $j$-subset $I$ of $[n]$, the $I$-th coordinate of the image of $M_V$ in the factor $\PP^{\binom{n}{j} - 1}$ of $\PP$ is the minor of $M_V$ on the columns indexed by $I$ and rows indexed by $1, 2, \dots, j$. A little linear algebra shows that the map $F \rightarrow \PP$ taking $V$ to the point in $\PP$ described above is  injective and well-defined, i.e., the map does not depend on the choice of matrices $M_V$. Therefore, the map defines the multi-projective Pl\"ucker embedding of $F$. Sometimes, it is convenient for us to consider $F$ as a projective variety. Concretely, we compose the Pl\"ucker embedding with the Segre embedding of $\PP$ into projective space. The coordinates of the embedding are the products of the transversals of coordinates of $\PP$.

The \textit{Pl\"ucker algebra} $A$ is the coordinate ring of $F$ under the Pl\"ucker embedding. Explicilty, we take $A$ to be the subalgebra of $R = \CC[x_{i,j}]$ given by
\[
A = \CC[\det(X_I) \colon I \subseteq [n], \ |I| \in J] \subseteq R
\]
where $X_I$ is the submatrix of $X$ with columns indexed by $I$ and and rows indexed by $1,2, \dots, |I|$.
We will also consider the \textit{presentation} of $A$ as the quotient $S / \MI$ where 
\[
S := \CC[P_I \colon I \subseteq [n], \ |I| \in J] \quad
\text{is a polynomial ring and}
\quad 
\MI := \ker(R \rightarrow S \colon P_I \mapsto \det(X_I))
\quad \text{is an ideal}.
\]
We refer to $\MI$ as the \textit{Pl\"ucker ideal}, which is the vanishing ideal of $F = V(\MI) \subseteq \PP$.

\subsection{Matching field ideals}\label{sec: bg MF}

A \textit{matching field} for the Grassmannian $\Gr(k,n)$ is an ordering of the elements of each $k$-subset of $[n]$. 
The ordering of a subset $\{i_1, i_2, \dots, i_k\} \subseteq [n]$ is a \textit{tuple} $(i_1, i_2, \dots, i_k)$ of the matching field. A \textit{flag matching field} for $\Fl(j_1, \dots, j_k; n)$ is a collection of matching fields $L = \{L_1, \dots, L_k\}$ where $L_i$ is a matching field for $\Gr(j_i, n)$. The set of \textit{tuples} of $L$ is the union of the set of tuples of each $L_i$.

Fix a matching field $L = \{L_1, \dots, L_k\}$ for the partial flag variety $\Fl(j_1 < \dots < j_k; n)$ and write $J = \{j_1, \dots, j_k\}$. For each tuple $(i_1, \dots, i_\ell)$ of $L$, with underlying set $I = \{i_1, \dots, i_\ell\}$,
we define the monomial $m_I = (-1)^c x_{1, i_1} x_{2, i_2} \dots x_{\ell, i_\ell} \in R$ where $c = |\{(a, b) \in [\ell] \times [\ell] \colon a < b, \ i_a > i_b \}|$ is the number of \textit{descents} of the tuple. Equivalently, the coefficient $(-1)^c$ of $m_I$ is such that $m_I$ is a term of $\det(X_I)$.

\begin{definition}\label{def: bg mf algebra ideal}
    Recall the rings $R = \CC[x_{i,j}]$ and $S = \CC[P_I]$. With the above setup, we define the \textit{monomial algebra of the matching field} 
    \[
    \CC[L] := \CC[m_I \colon I \subseteq [n], \ |I| \in J] \subseteq R.
    \]
    The \textit{matching field ideal} of $L$ is the presentation ideal of $\CC[L]$ given by  $\MI_L := \ker(S \rightarrow R \colon P_I \mapsto m_I)$.
\end{definition}

It is helpful to imagine $\CC[L]$ as a \textit{`candidate initial algebra'} of the Pl\"ucker algebra $A$. We say that a matching field $L$ is \textit{coherent} if there is a weight matrix $w \in \RR^{(n-1) \times n}$ for the polynomial ring $R$ such that $\init_w(\det(X_I)) = m_I$ for each subset $I \subseteq [n]$ with $|I| \in J$. Note, if a weight matrix $w$ exists then it uniquely identifies all tuples of the matching field $L$. In this case, we say that $L$ is the matching field \textit{induced} by $w$. 

\begin{definition}\label{def: bg toric degen}
    Let $L$ be a coherent matching field induced by a weight matrix $w$. We say that $L$ gives rise to a \textit{toric degeneration} of $F$ if the initial algebra $\init_w(A) := \CC[\init_w(f) \colon f \in A]$ of the Pl\"ucker algebra is equal to $\CC[L]$ the algebra of the matching field. For ease of notation, we say \textit{$L$ is toric} whenever $L$ gives rise to a toric degeneration of $F$. Equivalently, with the language of Remark~\ref{rmk: sagbi}, $L$ is toric if the generators of $A$ form a SAGBI basis with respect to weight order $w$.
\end{definition} 

Note that the choice of weight matrix $w$ does not affect whether $L$ is toric. That is, if $w'$ is another weight matrix that induces $L$, then we have $\init_w(A) = \CC[L] = \init_{w'}(A)$. So, the property of being toric is a well-defined property of $L$.

The property of being toric has an equivalent formulation in terms of the Pl\"ucker ideal $\MI$ and matching field ideal $\MI_L$. Given a weight matrix $w$ that induces a coherent matching field $L$, observe that $w$ is a weight for $R = \CC[x_{i,j}]$. We define the \textit{induced weight vector $\widehat w$}, for the polynomial ring $S$, by
$\widehat w(P_I) = w(m_I)$. The following is an application of \cite[Theorem~11.4]{sturmfels1996grobner}.

\begin{proposition}\label{prop: toric degen ideals}
    Let $L$ be a coherent matching field induced by a weight matrix $w$. Then $L$ is toric if and only if $\init_{\widehat w}(\MI) = \MI_L$.
\end{proposition}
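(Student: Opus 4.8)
The plan is to apply the general SAGBI/subalgebra-basis machinery of Sturmfels, specifically the analogue of \cite[Theorem~11.4]{sturmfels1996grobner}, which relates initial algebras to presentation ideals. First I would fix a weight matrix $w$ inducing the coherent matching field $L$, and choose a monomial order $\prec$ on $R = \CC[x_{i,j}]$ that refines $w$; this exists and has the property that $\init_\prec(\det(X_I)) = \init_\prec(m_I) = m_I$ for every $I$ with $|I| \in J$, since $m_I$ is already a monomial. Pulling $\prec$ back through the presentation $S \to R$ (together with the induced weight $\widehat w$ on $S$, where $\widehat w(P_I) = w(m_I)$) gives a compatible weight/order structure on $S$.

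The key step is the following dictionary. On one hand, $\init_{\widehat w}(\MI)$ is, by Proposition-level input, precisely the ideal of relations witnessed at the level of $w$-initial forms; on the other hand, $\MI_L = \ker(S \to R \colon P_I \mapsto m_I)$ is the ideal of relations among the monomials $m_I$. One always has the containment $\MI_L \supseteq \init_{\widehat w}(\MI)$: given $f \in \MI$, its image in $R$ vanishes, so the lowest-$w$-weight part $\init_w(f)$ maps to $\init_w(0)$-type cancellation, and tracking this through gives $\init_{\widehat w}(f) \in \MI_L$. The reverse containment is the substantive direction and is exactly where the toric hypothesis enters: $\MI_L \subseteq \init_{\widehat w}(\MI)$ holds if and only if every relation among the $m_I$ lifts to a relation among the $\det(X_I)$ modulo higher-weight terms, which is the statement that the $\det(X_I)$ form a SAGBI basis with respect to $w$, i.e.\ that $\init_w(A) = \CC[m_I \colon |I| \in J] = \CC[L]$. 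This equivalence is the content of \cite[Theorem~11.4]{sturmfels1996grobner} applied to the subalgebra $A \subseteq R$ with its chosen generating set.

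Concretely, I would argue the ``only if'' direction by assuming $\init_w(A) = \CC[L]$ and showing $\MI_L \subseteq \init_{\widehat w}(\MI)$: take $g \in \MI_L$ homogeneous for $\widehat w$, realise $g(m_I)= 0$ in $R$; using the SAGBI property one inductively constructs $f \in S$ with $\widehat w(f) = \widehat w(g)$, $\init_{\widehat w}(f) = g$, and $f(\det(X_I)) = 0$, i.e.\ $f \in \MI$, by the standard subduction/lifting argument (cancel the top $w$-graded piece, the remainder lies in $A$ hence in $\CC[L] = \init_w(A)$, recurse). For the ``if'' direction, assume $\init_{\widehat w}(\MI) = \MI_L$ and deduce that the Hilbert functions of $\init_w(A)$ and $\CC[L]$ agree in every multidegree: both are computed from $S$ modulo an ideal whose $\widehat w$-initial ideal, and hence Hilbert function, is that of $\MI_L$ (for $\init_w(A)$ one uses that $\init_w(A) = S/\init_{\widehat w}(\MI)$ as the associated graded of the $w$-filtration on $A = S/\MI$). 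Since $\CC[L] \subseteq \init_w(A)$ always holds and the two graded pieces have equal dimension, equality follows.

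The main obstacle is making the lifting argument in the ``only if'' direction fully rigorous: one must verify that the subduction process terminates and that it produces an element of $\MI$ rather than merely an element of $S$ whose image has strictly larger $w$-weight at each step without ever reaching zero. This is handled by the standard Noetherian/grading argument once one knows $\init_w(A)$ is finitely generated --- here automatic since $\init_w(A) \supseteq \CC[L]$ with $\CC[L]$ finitely generated, and the reverse once we know they're equal --- but care is needed to set up the induction on $w$-weight correctly and to invoke the hypothesis $\init_w(A) = \CC[L]$ at precisely the point where a partially-subducted element is asserted to lie in $\CC[L]$. Everything else is bookkeeping with the compatible monomial order $\prec$ refining $w$, which guarantees that initial forms behave multiplicatively and that $\init_w$ commutes with the operations needed.
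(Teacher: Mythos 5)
Your approach is essentially the paper's: the paper disposes of this proposition by directly citing \cite[Theorem~11.4]{sturmfels1996grobner}, and your plan is precisely an application of that theorem, with the standard containment $\init_{\widehat w}(\MI) \subseteq \MI_L$ (the paper elsewhere attributes this to \cite[Lemma~11.3]{sturmfels1996grobner}) plus the subduction and Hilbert-function arguments that constitute its usual proof. Your reconstruction of those arguments is sound, so there is nothing to correct.
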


\begin{example}\label{example: bg diag mf}
    The \textit{diagonal matching field} is defined so that the entries of each tuple are increasing. For instance, the diagonal matching field $L$ for $\Gr(3,6)$ has tuples: 
    \[
    (1,2,3), \, 
    (1,2,4), \,
    (1,3,4), \,
    (2,3,4), \,
    (1,2,5), \, \dots, \, (4,5,6).
    \]
    For each $3$-subset $I \subseteq [6]$, the monomial $m_I$ is the leading diagonal term of the maximal minor $\det(X_I)$:
    \[
    m_{123} = x_{1,1}x_{2,2}x_{3,3},\, 
    m_{124} = x_{1,1}x_{2,2}x_{3,4},\,
    \dots,\,
    m_{456} = x_{1,4}x_{2,5}x_{3,6}.
    \]
    In general, diagonal matching fields are coherent as they are induced by the weight matrix
    \[
    w = \begin{bmatrix}
        0 & 0 & 0 & \dots & 0 \\
        n & n-1 & n-2 & \dots & 1 \\
        2n & 2(n-1) & 2(n-2) & \dots & 2 \\
        \vdots & \vdots & \vdots & \ddots & \vdots \\
        (n-2)n & (n-2)(n-1) & (n-2)(n-2) & \dots & n-2 
    \end{bmatrix}.
    \]
    The induced weight vector for the diagonal matching field of $\Gr(3,6)$ is given by
    \[
    \widehat w_{123} = 13, \, 
    \widehat w_{124} = 11, \,
    \widehat w_{134} = 10, \, 
    \widehat w_{234} = 10, \,
    \widehat w_{125} = 9, \,
    \dots, \,
    \widehat w_{456} = 4.
    \]
    The diagonal matching field is toric for any Grassmannian and partial flag variety \cite{miller2005combinatorial}. This toric degeneration is well-studied and naturally arises from the representation theory of algebraic groups \cite{gonciulea1996degenerations, makhlin2020gelfand}. It is commonly known as the Gelfand-Tsetlin degeneration.
\end{example}

\begin{remark}
    In the \textit{MatchingFields} package, we use the characterisation in Proposition~\ref{prop: toric degen ideals} to test whether a matching field is toric. This is because \textit{Macualay2} is specialised at computing Gr\"obner bases. In particular, our implementation computes a partial Gr\"obner basis for $\init_w(\MI)$. The matching field ideal $\MI_L$ is a toric ideal so we efficiently compute it using the software package \textit{4ti2} \cite{4ti2, FourTiTwoSource}.
\end{remark}

\begin{remark}\label{rmk: sagbi}
    Given a finite set of polynomials $f_1, \dots, f_s$ of a polynomial ring equipped with a fixed term order. If the initial forms generate the initial algebra $K[\init(f_1), \dots, \init(f_s)] = \init(K[f_1, \dots, f_s])$, then $f_1, \dots, f_s$ is called a \textit{SAGBI} (Subalgebra Analogue of Gr\"obner Bases for Ideals) basis for $K[f_1, \dots, f_s]$. More generally, SAGBI bases are defined for quotients of polynomial rings \cite{stillman1999using} and finitely generated algebras equipped with discrete valuations \cite{kaveh2019khovanskii}. The name \textit{SAGBI basis} is typically used for subrings of polynomial rings or quotients of polynomial rings and \textit{Khovanskii Basis} for algebras with valuations. However, the literature is varied in its naming conventions and also includes \textit{canonical bases} and \textit{subalgebra bases}.
\end{remark}

\subsection{Matching field polytopes and Newton-Okounkov bodies}\label{sec: bg polytopes}

Fix a matching field $L = \{L_1, \dots, L_k\}$ for the partial flag variety $F = \Fl(j_1 < \dots < j_k; n)$ and let $J = \{j_1, \dots, j_k\}$. For each $i \in [k]$, define the polytope $P_i \subseteq \RR^{(n-1) \times n}$ as the convex hull of the exponent vectors of the monomials $m_I$ for each subset $I \subseteq [n]$ with $|I| = j_i$. The \textit{matching field polytope} of $L$ is the Minkowski sum $P_L = P_1 + P_2 + \dots + P_k$. Observe that $P_L$ is a lattice polytope, i.e., all its vertices lie in $\ZZ^{(n-1) \times n}$.

We recall the definition of the Ehrhart polynomial. Let $Q \subseteq \RR^d$ be a lattice polytope. The Ehrhart polynomial $E_Q(n) \in \QQ[n]$ is the polynomial such that for each $n \in \NN$, the value $E_Q(n) = |nQ \cap \ZZ^d|$ is the number of lattice points of the $n$th dilate of $Q$.

The matching field polytope gives a characterisation of toric matching fields. The result below follows directly from \cite[Theorem~1]{clarke2022partial} and \cite[Theorem~1]{clarke2022combinatorial}.

\begin{proposition}\label{prop: bg toric degen via polytope}
    Let $L$ and $L'$ be coherent matching fields for the same flag variety and assume $L$ is toric. Then $L'$ is toric if and only if the Ehrhart polynomials $E_{P_{L}}$ and $E_{P_{L'}}$ coincide.
\end{proposition}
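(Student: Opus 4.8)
The plan is to reduce the statement to a comparison of initial algebras via the known equivalences, using the fact that both matching field algebras are deformations of the same Plücker algebra $A$. First I would recall that since $L$ is toric, we have $\init_w(A) = \CC[L]$, so the Hilbert function of $\CC[L]$ equals that of $A$, which in turn equals that of $\CC[L']$ precisely when $L'$ is toric (because $\CC[L']$ is always a flat degeneration candidate whose Hilbert function is at most that of $A$, with equality iff the initial algebra coincides). Concretely, $\CC[L']$ is the coordinate ring of the toric variety $X_{P_{L'}}$ associated to the lattice polytope $P_{L'}$ (under the Segre-composed embedding, so that $F$ sits in a single projective space), and its multigraded Hilbert function in degree $d$ counts lattice points of the dilate $dP_{L'}$ — this is exactly the Ehrhart polynomial $E_{P_{L'}}(d)$. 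The analogous statement holds for $P_L$.

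Next I would assemble the chain of equivalences. By Proposition~\ref{prop: toric degen ideals}, $L'$ is toric iff $\init_{\widehat w'}(\MI) = \MI_{L'}$; since taking initial ideals preserves Hilbert functions, the ideal $\MI_{L'} \subseteq \init_{\widehat w'}(\MI)$ (the matching field ideal always sits inside the initial ideal, as $\CC[L']$ is a candidate initial algebra) is an equality iff the two ideals have the same Hilbert function, iff $\dim (S/\MI_{L'})_d = \dim(S/\MI)_d$ for all $d$. The left side is $E_{P_{L'}}(d)$ and the right side is $\dim A_d = E_{P_L}(d)$ by the toricness of $L$. Hence $L'$ is toric $\iff$ $E_{P_{L'}} = E_{P_L}$ as polynomials (equality for all large $d$ forces equality of the polynomials). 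The hypothesis that $L$ and $L'$ are matching fields for the \emph{same} flag variety is what guarantees that the ambient polynomial ring $S$, and hence the comparison degree-by-degree, is the same in both cases.

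The main technical point — the one I would expect to be the real obstacle — is justifying that $E_{P_{L'}}(d) = \dim(S/\MI_{L'})_d$, i.e.\ that the matching field algebra $\CC[L']$ is \emph{normal} (so that its Hilbert function is genuinely the Ehrhart polynomial of $P_{L'}$ rather than merely the Hilbert polynomial of some possibly non-normal toric ring with fewer lattice points in low degrees). For Grassmannian matching fields this normality is known, and for flag matching fields one argues via the Minkowski-sum structure $P_{L} = P_1 + \dots + P_k$ together with the fact that the relevant semigroup is generated in degree one; this is precisely the content invoked from \cite{clarke2022partial} and \cite{clarke2022combinatorial}. I would cite those results for the normality/Ehrhart identification and then the rest is the bookkeeping above. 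An alternative, cleaner route that sidesteps normality is to work entirely at the level of Hilbert \emph{series}: $L'$ toric $\iff \init_{\widehat w'}(\MI) = \MI_{L'} \iff$ these have equal Hilbert series $\iff \MI_{L'}$ and $\MI_L$ have equal Hilbert series (both equal that of $\MI$, using toricness of $L$) $\iff E_{P_{L'}} = E_{P_L}$, where the last step identifies the Hilbert series of a matching field ideal with the Ehrhart series of its polytope — again the only nontrivial input, supplied by the cited theorems.
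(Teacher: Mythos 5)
Your proposal is essentially the paper's own argument: the paper defers to \cite{clarke2022partial, clarke2022combinatorial} and, in the remark following Proposition~\ref{prop: toric degen via mf polytope volume}, sketches exactly your chain — the Hilbert function of $S/\MI_{L}$ equals $E_{P_L}$ (the one nontrivial input, cited), initial ideals preserve Hilbert functions, and a containment between $\MI_{L'}$ and $\init_{\widehat w'}(\MI)$ turns equality of Hilbert functions into equality of ideals, hence into the toric property via Proposition~\ref{prop: toric degen ideals}. One slip: the containment goes the other way, $\init_{\widehat w'}(\MI) \subseteq \MI_{L'}$ (this is \cite[Lemma~11.3]{sturmfels1996grobner}), which is in fact the direction consistent with your own remark that the Hilbert function of $\CC[L']$ is at most that of $A$; since the argument only needs a containment in one direction, this does not affect the validity of your proof.
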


Typically the toric matching field is taken to be the diagonal matching field. We note that the following stronger version of this result holds.

\begin{proposition}\label{prop: toric degen via mf polytope volume}
    Let $L$ be a coherent matching field for $F$. Then $L$ is toric if and only if the volume of $P_L$ is equal to the volume of the diagonal matching field  polytope (Gelfand-Tsetlin polytope) for $F$, which is maximal among all coherent matching fields for $F$.
\end{proposition}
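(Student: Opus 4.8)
The plan is to realise the matching field polytope $P_L$ as a Newton--Okounkov body of the Pl\"ucker algebra $A$ and to exploit the fact that the volume of such a body depends only on $A$. Fix a weight matrix $w$ inducing $L$ and refine it to a total monomial order $\prec$ on $R$. Since $\init_w(\det X_I)=m_I$ is a single monomial, the hypothesis that $\prec$ refines $w$ gives $\init_\prec(\det X_I)=\init_\prec(m_I)=m_I$ for every admissible $I$, and as the monomials $\{m_I : |I|=j_i\}$ are pairwise distinct, the map $\nu$ sending a nonzero $f\in A$ to the exponent vector of its $\prec$-smallest monomial is a valuation of $A$ with $\nu(\det X_I)$ equal to the exponent vector of $m_I$. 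Grading $A$ by its $\NN^k$-multidegree, the value semigroup $S_\nu := \{(\nu(f),\mathbf d): 0\ne f\in A_{\mathbf d}\}$ contains the subsemigroup generated by the pairs $(\nu(\det X_I),\mathbf e_i)$ (with $i$ chosen so that $j_i=|I|$); slicing the cone $\mathrm{cone}(S_\nu)$ at multidegree $(1,\dots,1)$ yields a compact convex body $\Delta_\nu$, while slicing the cone over the subsemigroup at the same multidegree recovers exactly $P_1+\dots+P_k=P_L$. Hence $P_L\subseteq\Delta_\nu$.

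Two further ingredients are standard. First, with the normalisation $d!\,\mathrm{vol}(\Delta_\nu)=\deg F$, where $d=\dim F$ and $F$ is taken in its Segre embedding, the volume $\mathrm{vol}(\Delta_\nu)$ depends only on $A$ and not on $\prec$ or $w$ (this is the asymptotic Newton--Okounkov volume identity, as $|S_\nu$ in multidegree $N(1,\dots,1)|=\mathrm{HF}_A(N(1,\dots,1))$ grows like $\deg F\cdot N^d/d!$). Second, $S_\nu$ equals the subsemigroup generated by the $(\nu(\det X_I),\mathbf e_i)$ --- equivalently $\Delta_\nu=P_L$ --- if and only if the Pl\"ucker generators form a SAGBI/Khovanskii basis for $A$ with respect to $w$ (Kaveh--Manon, \cite{kaveh2019khovanskii}), which by Definition~\ref{def: bg toric degen} is exactly the statement that $L$ is toric.

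Granting these, the proposition follows quickly. Applying the construction to the diagonal matching field, which is toric by Example~\ref{example: bg diag mf}, gives $\Delta_{\nu_{\mathrm{diag}}}=P_{L_{\mathrm{diag}}}$, so $d!\,\mathrm{vol}(P_{L_{\mathrm{diag}}})=\deg F$. For an arbitrary coherent $L$ we then have $P_L\subseteq\Delta_\nu$ with $d!\,\mathrm{vol}(\Delta_\nu)=\deg F=d!\,\mathrm{vol}(P_{L_{\mathrm{diag}}})$, hence $\mathrm{vol}(P_L)\le\mathrm{vol}(P_{L_{\mathrm{diag}}})$, which is the claimed maximality of the Gelfand--Tsetlin volume. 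Moreover, if $\mathrm{vol}(P_L)=\mathrm{vol}(P_{L_{\mathrm{diag}}})$ then $P_L$ is full-dimensional in the slice (otherwise its $d$-dimensional volume would vanish), and a full-dimensional convex body contained in the convex body $\Delta_\nu$ of the same volume must equal it; thus $P_L=\Delta_\nu$, which by the second ingredient says precisely that $L$ is toric. The reverse implication is the toric case of that same ingredient.

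I expect the main obstacle to be the careful verification, in the matching field setting, of the two ``standard'' ingredients: identifying the combinatorial polytope $P_L$ --- a Minkowski sum of convex hulls of $0/1$ exponent matrices --- with the Newton--Okounkov body $\Delta_\nu$, which a priori packages valuation data in all multidegrees rather than only in the $\mathbf e_i$, and pinning down that $\Delta_\nu=P_L$ exactly when the Pl\"ucker forms are a Khovanskii basis. One must be attentive to the multi-projective grading, the section ring chosen to define the body, and the normalisation relating Euclidean volume, lattice-normalised volume and $\deg F$; in particular one should confirm that $\mathrm{cone}(S_\nu)$ is $(\dim F+1)$-dimensional so that the slice $\Delta_\nu$ is genuinely full-dimensional. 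A more self-contained alternative avoids Newton--Okounkov bodies: by Proposition~\ref{prop: bg toric degen via polytope} it suffices to promote equality of the leading coefficients of $E_{P_L}$ and $E_{P_{L_{\mathrm{diag}}}}$ to equality of the full Ehrhart polynomials, and the containment $P_L\subseteq\Delta_\nu$ together with non-negativity of $h^{\ast}$-vectors provides the control over the lower-order coefficients needed; but this variant still rests on the same identification of $P_L$ with a body of the correct volume.
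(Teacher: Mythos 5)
Your proposal is correct and follows essentially the same route as the paper's proof: the containment $P_L \subseteq \Delta(E)$, the fact that the normalised volume of the Newton--Okounkov body equals the degree of $F$ and hence is independent of the inducing weight, and the equivalence of Proposition~\ref{prop: toric degen via NO body} that $L$ is toric exactly when $P_L$ coincides with the body, applied to the (toric) diagonal matching field to get maximality. The only cosmetic difference is that you re-derive these ingredients in the valuation/Khovanskii-basis language of \cite{kaveh2019khovanskii} instead of quoting Proposition~\ref{prop: toric degen via NO body} directly.
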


The proof of this proposition is most easily seen from the perspective of Newton-Okounkov bodies, so we postpone its proof.

\begin{remark}
    In \cite{clarke2022combinatorial, clarke2022partial}, the proof of Proposition~\ref{prop: bg toric degen via polytope} has two parts. First, the Hilbert function of $S/\MI_L$ is equal to the Ehrhart polynomial of $P_L$. Second, the Hilbert functions of $S/\init_w(\MI)$ and $S/\MI$ are equal and, by \cite[Lemma~11.3]{sturmfels1996grobner}, we have that $\init_w(\MI) \subseteq \MI_L$. So $L$ if toric if and only if $E_{P_L}$ is equal to the Hilbert function of the coordinate ring $S/\MI$. By Proposition~\ref{prop: toric degen via mf polytope volume}, it suffices to check only the volume of the $P_L$, i.e., the leading coefficient of the Ehrhart polynomial. Moreover, only the matching fields whose polytopes have maximal volume, such as the Gelfand-Tsetlin polyotope \cite{ardila2011gelfand, liu2019gelfand}, have the toric property. 
\end{remark}

\medskip

\noindent
\textbf{Newton-Okounkov bodies.}
Fix positive integers $k$ and $n$ and write $\mathbf{0} = (0, \dots, 0) \in \RR^k$ for the all-zeros vector and $\mathbf{1} = (1, \dots, 1) \in \RR^k$ for the all-ones vector.  
Let $E \subseteq \ZZ^k \times \ZZ^n \subset \RR^k \times \RR^n$ be an affine semigroup, i.e., for all $u, v \in E$ we have $u+v \in E$, and assume that $E \cap (\mathbf{0} \times \ZZ^n) = \emptyset$. For each $u = (u_1, \dots, u_{k+n}) \in E$, we call $(u_1, \dots, u_k)$ the \textit{degree} of $u$. The \textit{Newton-Okounkov} body of $E$ is 
\[
\Delta(E) := \overline{\conv(E) \cap (\mathbf{1} \times \RR^n)},
\]
where $\overline S$ is the Euclidean closure of $S$. The Newton-Okounkov body encodes information about the limiting behaviour of $E$ \cite{kaveh2012newton}.

Consider the Pl\"ucker algebra $A \subseteq R$ for the partial flag variety $\Fl(j_1, \dots, j_k; n)$ and fix a term order $\prec$ on $R$. For each $f \in A$, we define its \textit{degree} $d(f) \in \ZZ^k$ by first defining $d(\det(X_I)) = e_i \in \ZZ^k$ the $i$th standard basis vector for each $I \subseteq [n]$ with $|I| = j_i$. The initial term $\init_\prec(f)$ is a monomial that appears in the expansion of some product of determinants $\prod_i \det(X_{I_i})$. We define $d(f) := \sum_i d(\det(X_{I_i}))$. It is straightforward to show that $d(f)$ is well-defined, i.e., it does not depend on the choice of the sets $I_i$.
The affine semigroup associated to $A$ and $\prec$ is the set of exponent vectors of initial terms of elements of $A$:
\[
E(A, \prec) := \{(d(f), e) \in \ZZ^k \times \ZZ^{(n-1) \times n} \colon f \in A \text{ and } \init_\prec(f) = x^e\}.
\]
Suppose that $L$ is a coherent matching field induced by a weight matrix $w$. Let $R$ be the ambient polynomial ring containing the Pl\"ucker algebra $A = \CC[\det(X_I)] \subseteq R$ and $\prec$ be any monomial order on $R$ that refines the weight order $w$. Let $E = E(A, \prec)$ be the affine semigroup above. Observe that the initial forms $\init_w(\det(X_I)) = \init_\prec(\det(X_I))$ generate the initial algebra $\init_\prec(A)$ if and only if the exponents of $\init_w(\det(X_I))$ are the rays that generate the cone over $E$. In other words, we have the following.

\begin{proposition}\label{prop: toric degen via NO body}
    The matching field $L$ is toric if and only if the matching field polytope $P_L$ coincides with the Newton-Okounkov body $\Delta(E)$.
\end{proposition}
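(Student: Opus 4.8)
The plan is to unwind the definitions of the matching field polytope $P_L$ and the Newton-Okounkov body $\Delta(E)$, and show that they agree precisely when the initial forms $\init_w(\det(X_I))$ generate the initial algebra $\init_\prec(A) = \init_w(A)$, which by Definition~\ref{def: bg toric degen} is equivalent to $L$ being toric. The key observation, already signposted in the paragraph preceding the statement, is that both objects live naturally inside the slice $\mathbf 1 \times \RR^{(n-1)\times n}$ after a rescaling, so the comparison reduces to a statement about the cone over $E = E(A,\prec)$ versus the cone generated by the exponent vectors of the $m_I = \init_w(\det(X_I))$.

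First I would set up the homogeneity needed to make Newton-Okounkov bodies behave well: the Pl\"ucker algebra $A$ is multigraded by the degree map $d$, and for $f \in A$ homogeneous of degree $\delta = (\delta_1, \dots, \delta_k)$ the exponent $e$ of $\init_\prec(f)$ satisfies $\sum_j e_{?,?}$-type column/row constraints so that $(d(f), e)$ lies on the hyperplane where the degree coordinates sum appropriately; concretely each generator $\det(X_I)$ with $|I| = j_i$ contributes $e_i$ to the degree and its exponent vector is a $0/1$ matrix supported in rows $1, \dots, j_i$ with exactly one $1$ per such row. Hence $E$ is a pointed affine semigroup whose elements of degree $e_i$ are exactly the exponent vectors of the monomials appearing in $\det(X_I)$, and $\conv(E) \cap (\mathbf 1 \times \RR^{(n-1)\times n})$, after projecting away the degree coordinates, is exactly the Minkowski sum over $i$ of the convex hulls of the degree-$e_i$ exponents. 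Second, I would compare this with $P_L = P_1 + \dots + P_k$, where $P_i$ is the convex hull of the exponents of the $m_I$ with $|I| = j_i$: since each $m_I$ is a term of $\det(X_I)$, its exponent lies in $E$, so $P_L \subseteq \Delta(E)$ always. Third, for the reverse containment I would argue that $P_L = \Delta(E)$ holds iff every degree-$e_i$ element of $E$ — i.e. every exponent of an initial term $\init_\prec(f)$ for $f$ in the degree-$e_i$ component of $A$ — lies in $P_i$, which is precisely the condition that $\init_\prec(A)$ is generated by the $m_I$; one direction is clear, and the other uses that $\init_\prec(A)$ is spanned by monomials $\init_\prec(f)$ together with the fact that if the $m_I$ generate then every such monomial is a product of $m_I$'s (hence its exponent is a sum, landing in the Minkowski sum $P_L$), invoking Proposition~\ref{prop: toric degen ideals} or the SAGBI-basis criterion of Remark~\ref{rmk: sagbi} to make ``generated by'' precise.

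The main obstacle I anticipate is the equidimensionality/normalization bookkeeping: one must be careful that $\Delta(E)$ is genuinely the Minkowski sum of the slices of the subcones of degree $e_i$, rather than something larger coming from ``mixed'' elements of $E$ whose degree has several nonzero coordinates. This is handled by the multigrading — any element of $E$ of degree $\delta$ arises from a homogeneous $f$ whose initial term is a product of $\det(X_{I_i})$'s, so its exponent is a sum of degree-$e_i$ exponents — but writing this cleanly requires noting that $A$ is generated in the degrees $e_1, \dots, e_k$ and that taking initial terms is multiplicative on the relevant products, i.e. $\init_\prec(\prod_i \det(X_{I_i})) = \prod_i \init_\prec(\det(X_{I_i})) = \prod_i m_{I_i}$ when $\prec$ refines $w$. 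Once that multiplicativity is in hand, the equivalence ``$P_L = \Delta(E)$ $\iff$ the $m_I$ generate $\init_\prec(A)$ $\iff$ $L$ is toric'' follows by comparing the two cones ray-by-ray, exactly as in the sentence preceding the proposition.
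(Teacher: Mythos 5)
There is a genuine gap, and it sits exactly where you placed your ``main obstacle'' paragraph. Your argument hinges on the claim that any element of $E$ of degree $\delta$ ``arises from a homogeneous $f$ whose initial term is a product of $\det(X_{I_i})$'s, so its exponent is a sum of degree-$e_i$ exponents.'' This is not a consequence of the multigrading: the definition of $d(f)$ only records that $\init_\prec(f)$ occurs as a monomial \emph{in the expansion} of some product $\prod_i \det(X_{I_i})$, which is much weaker than $\init_\prec(f) = \prod_i m_{I_i}$. A homogeneous element of $A$ is a linear combination of such products, and cancellation among them can make a monomial that is not a product of the $m_I$ into the initial term; this happens precisely when the Pl\"ucker forms fail to be a SAGBI basis, i.e.\ precisely when $L$ is not toric. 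So the statement you invoke to show that $\Delta(E)$ equals the Minkowski sum of the degree-$e_i$ hulls is equivalent to the toric property itself, and the argument becomes circular. Indeed, if that claim held unconditionally it would give $P_L = \Delta(E)$ for \emph{every} coherent matching field, contradicting the hexagonal matching field for $\Gr(3,6)$ computed in Section~\ref{sec: other functions}, where the normalised volumes are $38$ versus $42$ and the extra vertex of $\Delta(E)$ (with half-integral coordinates) comes from a degree-$2$ generator of $\init_\prec(A)$.

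Two further points. First, the degree-$e_i$ part of $E$ is not ``the exponent vectors of the monomials appearing in $\det(X_I)$'': since distinct Pl\"ucker forms of the same size have disjoint monomial supports and $\prec$ refines $w$, the initial term of any element of the degree-$e_i$ component of $A$ is automatically one of the $m_I$, so the degree-$e_i$ elements of $E$ are exactly the exponents of the $m_I$. Consequently your step-three criterion, ``every degree-$e_i$ element of $E$ lies in $P_i$,'' holds vacuously and cannot characterise generation of $\init_\prec(A)$; the obstruction lives in degrees with $|\delta|\ge 2$. The correct route --- and the paper's (one-line) argument --- is to compare cones rather than generating degrees: $L$ is toric if and only if the exponents of the $m_I$ generate the cone over $E$, and slicing at degree $\mathbf{1}$ identifies the cone over the $m_I$-exponents with $P_L$ and the cone over $E$ with $\Delta(E)$, with equality of slices forcing equality of cones. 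Your containment $P_L \subseteq \Delta(E)$ and the multiplicativity $\init_\prec\bigl(\prod_i \det(X_{I_i})\bigr) = \prod_i m_{I_i}$ are correct and reusable, but the ``if'' direction must confront the possible higher-degree elements of $E$ lying outside the cone generated by the $m_I$-exponents rather than assume they do not exist.
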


We now give a proof of Proposition~\ref{prop: toric degen via mf polytope volume}.

\begin{proof}[Proof of Proposition~\ref{prop: toric degen via mf polytope volume}]
    Let $w$ be a weight vector that induces the matching field $L$ and $\prec$ be any monomial order that refines the weight order $w$. Let $E = E(A, \prec)$ be the semigroup defined above. The matching field polytope $P_L \subseteq \Delta(E)$ is a subset of the Newton-Okounkov body. The normalised volume of $\Delta(E)$ coincides with the degree of $F$ under the Pl\"ucker embedding,  hence it does not depend on the choice of $w$. So, by Proposition~\ref{prop: toric degen via NO body}, the matching field $L$ is toric if and only if $P_L$ and $\Delta(E)$ have the same volume. Since the diagonal matching field is toric, the volume of diagonal matching field polytope is equal to $\Delta(E)$. In particular, it is maximal among all polytopes of coherent matching fields.
\end{proof}

\section{Matching fields in Macaulay2}\label{sec: M2}

We introduce the package \textit{MatchingFields} for \textit{Macaulay2}. There are two main types of objects introduced by the package: \texttt{GrMatchingField} and \texttt{FlMatchingField}, which represent Grassmannian and flag matching fields respectively. The code throughout is collected in the file \texttt{matchingFieldsExampleCode.m2}, which accompanies this article.

\subsection{Constructing matching fields}\label{sec: defining MFs}

The diagonal matching field is defined with the function \texttt{diagonalMatchingField}. The tuples of a matching field are listed with the function \texttt{getTuples} and appear in reverse lexicographic order on the underlying set. For flag matching fields, the subsets are first ordered by size.

\begin{example}\label{example: diag mf construction}

    Let $D$ be the diagonal matching field for $\Gr(3,6)$
    and $D'$ be the diagonal matching field for $\Fl(1,2,3; 6)$. The tuples of $D'$ are: $(i)$ for $i \in [6]$; $(i,j)$ with $1 \le i < j \le 6$; and $(i,j,k)$ with $1 \le i < j < k \le 6$. The matching fields $D$ and $D'$ are defined and their tuples listed as follows.

\begin{lstlisting}[caption = {Diagonal matching field}]
i1 : needsPackage "MatchingFields"
o1 = MatchingFields
o1 : Package

i2 : D = diagonalMatchingField(3, 6)
o2 = Grassmannian Matching Field for Gr(3, 6)
o2 : GrMatchingField

i3 : getTuples D
o3 = {{1, 2, 3}, {1, 2, 4}, {1, 3, 4}, {2, 3, 4}, {1, 2, 5}, {1, 3, 5}, {2, 3, 5}, {1, 4, 5}, {2, 4, 5}, {3, 4, 5}, {1, 2, 6}, {1, 3, 6}, {2, 3, 6}, {1, 4, 6}, {2, 4, 6}, {3, 4, 6}, {1, 5, 6}, {2, 5, 6}, {3, 5, 6}, {4, 5, 6}}
o3 : List

i4 : D' = diagonalMatchingField({1,2,3}, 6)
o4 = Flag Matching Field for Fl(1, 2, 3; 6)
o4 : FlMatchingField

i5 : getTuples D'
o5 = {{{1}, {2}, {3}, {4}, {5}, {6}}, 
    {{1, 2}, {1, 3}, {2, 3}, {1, 4}, {2, 4}, {3, 4}, {1, 5}, {2, 5}, {3, 5}, {4, 5}, {1, 6}, {2, 6}, {3, 6}, {4, 6}, {5, 6}}, 
    {{1, 2, 3}, {1, 2, 4}, {1, 3, 4}, {2, 3, 4}, {1, 2, 5}, {1, 3, 5}, {2, 3, 5}, {1, 4, 5}, {2, 4, 5}, {3, 4, 5}, {1, 2, 6}, {1, 3, 6}, {2, 3, 6}, {1, 4, 6}, {2, 4, 6}, {3, 4, 6}, {1, 5, 6}, {2, 5, 6}, {3, 5, 6}, {4, 5, 6}}}
o5 : List
\end{lstlisting}
\end{example}

The function \texttt{matchingFieldFromPermutation} constructs a matching field $B_\sigma$, described in \cite{clarke2022partial}, for some permutation $\sigma \in S_n$. These matching fields are induced by a weight matrix that is based on the diagonal weight matrix, as in Example~\ref{example: bg diag mf}, with the entries in the second row permuted by $\sigma$. The function \texttt{getWeightMatrix} shows the weight matrix used to induce the matching field. 

\begin{example}
    Let $\sigma = (1,2,3,6,5,4)$ be a permutation. Consider the matching field $B_\sigma$ for $\Gr(3,6)$ from \cite{clarke2022partial}. The matching field is induced by the weight matrix
    \[
    M_\sigma = \begin{bmatrix}
        0 & 0 & 0 & 0 & 0 & 0 \\
        1 & 2 & 3 & 6 & 5 & 4 \\
        30 & 24 & 18 & 12 & 6  & 0
    \end{bmatrix}.
    \]
    We construct $B_\sigma$ using the package as follows.
\begin{lstlisting}[caption = {Matching field from a permutation}]
i6 : L = matchingFieldFromPermutation(3, 6, {1,2,3,6,5,4})
o6 = Grassmannian Matching Field for Gr(3, 6)
o6 : GrMatchingField

i7 : getWeightMatrix L
o7 = | 0  0  0  0  0 0 |
     | 1  2  3  6  5 4 |
     | 30 24 18 12 6 0 |
              3        6
o7 : Matrix ZZ  <--- ZZ
\end{lstlisting}

\end{example}

\begin{remark}
    The matching fields $B_\sigma$ parametrised by permutations generalise the family of \textit{block diagonal matching fields}, which were originally defined in \cite{mohammadi2019matching}. The \textit{two-block diagonal matching field $B_i$} for some $i \in [n]$ is the matching field associated to the permutation $(i, i-1, \dots, 2,1,n, n-1, \dots, i+2, i+1)$.

    Block diagonal matching fields are known to give rise to toric degenerations of: Grassmannians and their Schubert and Richardson varieties \cite{clarke2020schubert, bonala2021grRichardson} and flag varieties \cite{clarke2021flag}.
    Moreover, the polytopes of these matching fields are related by \textit{combinatorial mutations} \cite{clarke2021combinatorial, clarke2022partial}, which are certain piecewise linear maps that preserve the Ehrhart polynomial.
\end{remark}

The functions \texttt{grMatchingField} and \texttt{flMatchingField} construct a matching fields induced by a weight matrix for the Grassmannian and flag variety respectively. For the Grassmannian $\Gr(k,n)$, the parameters $k$ and $n$ are determined by the number of rows and columns of the matrix respectively. For the flag variety $\Fl(j_1, \dots, j_k; n)$, the list $j_1, \dots, j_k$ must be supplied as the first argument.

\begin{example}

Let $L_1$ be the matching field for $\Gr(2, 6)$ induced by the weight matrix $w_1$ and let $L_2$ be the matching field for $\Fl(1,2; 3)$ induced by the weight matrix $w_2$ where
\[
w_1 = 
\begin{bmatrix}
    0 & 0 & 0 & 0 & 0 & 0 \\
    2 & 4 & 1 & 3 & 6 & 5
\end{bmatrix}
\quad \text{and} \quad
w_2 = 
\begin{bmatrix}
    0 & 0 & 0 \\
    3 & 1 & 2
\end{bmatrix}.
\]
These matching fields are constructed and their tuples computed as follows.
\begin{lstlisting}[caption = {Matching fields from weight matrices}]
i8 : L1 = grMatchingField matrix {{0,0,0,0,0,0}, {2,4,1,3,6,5}}
o8 = Grassmannian Matching Field for Gr(2, 6)
o8 : GrMatchingField

i9 : getTuples L1
o9 = {{2, 1}, {1, 3}, {2, 3}, {4, 1}, {2, 4}, {4, 3}, {5, 1}, {5, 2}, {5, 3}, {5, 4}, {6, 1}, {6, 2}, {6, 3}, {6, 4}, {5, 6}}
o9 : List

i10 : getWeightMatrix L1
o10 = | 0 0 0 0 0 0 |
      | 2 4 1 3 6 5 |
              2        6
o10 : Matrix ZZ  <--- ZZ

i11 : L2 = flMatchingField({1,2}, matrix {{0,0,0}, {3,1,2}})
o11 = Flag Matching Field for Fl(1, 2; 3)
o11 : FlMatchingField

i12 : getWeightMatrix L2
o12 = | 0 0 0 |
      | 3 1 2 |
               2        3
o12 : Matrix ZZ  <--- ZZ

i13 : getTuples L2
o13 = {{{1}, {2}, {3}}, {{1, 2}, {1, 3}, {3, 2}}}
o13 : List
\end{lstlisting}
\end{example}

Matching fields are directly constructed from their tuples using the function \texttt{grMatchingField} and \texttt{flMatchingField}. The tuples may be supplied in any order. If a matching field is constructed from its tuples, then the resulting matching field may not be coherent and any subsequent functions that require a coherent matching field will produce an error. If a matching field is coherent, then a weight matrix is automatically constructed for it when required. The function \texttt{isCoherent} is used to check whether a matching field is coherent.

\begin{example}
    Let $L_3$ be the matching field for $\Gr(2,4)$ with tuples $T_3$ and $L_4$ be the matching field for $\Fl(1,2;3)$ with tuples $T_4$ where
    \[
    T_3 = \{
    (1,2),\, (1,3),\, (4,1),\, (2,3),\, (4,2),\, (3,4)\}
    \quad \text{and} \quad
    T_4 = \{
    (1),\, (2),\, (3),\,
    (1,2),\, (1,3),\, (3,2)
    \}.
    \]
    The matching field $L_3$ is not coherent. To see this, assume that a weight $w$ induces $L_3$. By adding constant vectors to each column of $w$, we do not change the induced matching field. So, we may assume that
    \[
    w = 
    \begin{bmatrix}
    0 & 0 & 0 & 0 \\
    a & b & c & d
    \end{bmatrix}
    \]
    for some $a,b,c,d \in \RR$. Since $(1,2)$ is a tuple, it follows that $a < b$. Similarly, the tuples $(2,3)$, $(3,4)$ and $(4,1)$ allow us to deduce that $a < b < c < d < a$, a contradiction. On the other hand, the matching field $L_4$ is coherent. We perform these computations and find a weight that induces $L_4$ as follows.
\begin{lstlisting}[caption = {Matching fields from tuples}]
i14 : L3 = grMatchingField(2, 4, {{1,2}, {1,3}, {4,1}, {2,3}, {4,2}, {3,4}})
o14 = Grassmannian Matching Field for Gr(2, 4)
o14 : GrMatchingField

i15 : isCoherent L3
o15 = false

i16 : getWeightMatrix L3
storicio:20:1:(3): error: expected a coherent matching field

i16 : L4 = flMatchingField({1,2}, 3, {{{1}, {2}, {3}},  {{1,2}, {1,3}, {3,2}}})
o16 = Flag Matching Field for Fl(1, 2; 3)
o16 : FlMatchingField

i17 : isCoherent L4
o17 = true

i18 : getWeightMatrix L4
o18 = | 0 0  0  |
      | 0 -2 -1 |
               2        3
o18 : Matrix ZZ  <--- ZZ
\end{lstlisting}

\end{example}

The method used for checking whether a matching field is coherent is as follows.

\begin{definition} \label{def: weight matrix cone}
Fix a matching field $L$ for the partial flag variety $\Fl(j_1, \dots, j_k; n)$. For each tuple $T = (i_1, \dots, i_s)$ of $L$ and permutation $\sigma = (\sigma_1, \dots, \sigma_s) \in Sym(T)$ of the entries of $T$, we define the half-space
\[
H(T, \sigma) := \left\{\sum_{a = 1}^s x_{a, i_a} \le \sum_{a = 1}^s x_{a, \sigma_a}  \right\}
\subseteq \RR^{(n-1) \times n}.
\]
The \textit{weight matrix cone} is the intersection of all such half spaces $\MC_L = \bigcap_{(T, \sigma)} H(T, \sigma)$.
\end{definition}

The weight matrix cone is constructed using the package with the function \texttt{weightMatrixCone} and can be used to test whether a matching field is coherent. 

\begin{proposition} \label{prop: coherent iff WMC full dim}
    Let $L$ be a matching field. The weight matrices that induce $L$ are the interior points of the weight matrix cone $\MC_L$. In particular, $L$ is coherent if and only if $\MC_L$ is full-dimensional.
\end{proposition}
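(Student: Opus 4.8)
The plan is to unwind the definition of ``$w$ induces $L$'' into a finite system of \emph{strict} linear inequalities in the entries of $w$, and then to observe that the solution set of this system is exactly the topological interior of the polyhedral cone $\MC_L \subseteq \RR^{(n-1)\times n}$.

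First I would fix a subset $I = \{i_1 < \dots < i_s\} \subseteq [n]$ with $s \in J$ and recall that the terms of $\det(X_I)$ are exactly the monomials $\pm\, x_{1,\pi_1} x_{2,\pi_2} \cdots x_{s,\pi_s}$, one for each ordering $\pi = (\pi_1, \dots, \pi_s)$ of the entries of $I$; since the first index runs over the distinct rows $1, \dots, s$, distinct orderings give distinct monomials, so the terms of $\det(X_I)$ are in bijection with these orderings. If $T = (t_1, \dots, t_s) \in Sym(I)$ is the tuple of $L$ on the set $I$, then $m_I$ is the term indexed by $T$, and the term indexed by $\sigma \in Sym(I)$ has $w$-weight $\sum_{a=1}^s w_{a,\sigma_a}$. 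Because $\init_w(\det(X_I))$ is, by the paper's convention, the sum of the \emph{minimum}-weight terms, we have $\init_w(\det(X_I)) = m_I$ exactly when $m_I$ is the strictly unique minimum-weight term, i.e.
\[
\sum_{a=1}^s w_{a, t_a} \ < \ \sum_{a=1}^s w_{a, \sigma_a} \qquad \text{whenever } \sigma \in Sym(T) \text{ and } \sigma \ne T,
\]
which is precisely the condition that $w$ lie in the open half-space $\operatorname{int} H(T,\sigma)$. Hence $w$ induces $L$ if and only if $w \in \operatorname{int} H(T,\sigma)$ for every tuple $T$ of $L$ and every $\sigma \in Sym(T)$ with $\sigma \ne T$. (For $\sigma = T$ the inequality defining $H(T,\sigma)$ is $0 \le 0$, so $H(T,T) = \RR^{(n-1)\times n}$ and it is irrelevant to the intersection $\MC_L$.)

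It then remains to check that this system of strict inequalities describes exactly $\operatorname{int}(\MC_L)$. The inclusion of strict solutions into $\operatorname{int}(\MC_L)$ is clear, since satisfying finitely many strict inequalities is an open condition. Conversely, if $w \in \operatorname{int}(\MC_L)$ and $H(T,\sigma) = \{\ell \le 0\}$ is one of the nontrivial defining half-spaces, then $\ell(x) = \sum_{a=1}^s (x_{a,t_a} - x_{a,\sigma_a})$ is a \emph{nonzero} linear functional — nonzero exactly because $\sigma \ne T$ and the row indices are distinct — and $\ell \le 0$ on a whole ball about $w$; a nonzero linear functional takes values exceeding $\ell(w)$ arbitrarily near $w$, so $\ell(w) < 0$. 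Thus $w$ is a strict solution, which proves the first assertion. For the ``in particular'': $L$ is coherent, by definition, iff some weight matrix induces it, iff $\operatorname{int}(\MC_L) \ne \emptyset$; and a convex polyhedron in $\RR^{(n-1)\times n}$ has nonempty interior iff it is full-dimensional.

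I do not expect a serious obstacle here; the one point that needs care is the bookkeeping in the second paragraph — identifying the terms of $\det(X_I)$ with orderings of $I$, and being precise that $\init_w(\det(X_I))$ equals the single monomial $m_I$ exactly when the minimum term-weight is attained uniquely (rather than $\init_w$ being a sum of monomials) — together with the related observation that the trivial half-space $H(T,T)$ must be discarded when passing to strict inequalities.
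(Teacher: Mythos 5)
Your proof is correct and follows exactly the route the paper intends: the paper simply states that the result ``follows immediately from the definitions,'' and your argument is the careful unwinding of that claim --- identifying the terms of $\det(X_I)$ with orderings of $I$, translating ``$w$ induces $L$'' into the strict versions of the inequalities defining $\MC_L$ (discarding the trivial ones with $\sigma = T$), and identifying the strict solution set with $\operatorname{int}(\MC_L)$. No discrepancy with the paper's approach; your handling of the nonvanishing of the functionals for $\sigma \ne T$ and of the equivalence between nonempty interior and full dimension for convex cones is exactly the bookkeeping the paper leaves implicit.
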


The proof of this proposition follows immediately from the definitions of the weight matrix cone and of coherent matching field.



\subsection{Ideals and algebras of matching fields}\label{sec: ideals and rings of MFs}








Let $L$ be a coherent matching field. We use the function \texttt{matchingFieldIdeal} to construct the matching field ideal $\MI_L$. We require that $L$ is coherent as the ambient polynomial rings $R$ and $S$ are equipped with the weight orders $w$ and $\widehat w$ respectively, where $w$ is the weight matrix inducing $L$. The Pl\"ucker ideal $\MI$ is constructed with the function \texttt{plueckerIdeal}. To test whether a matching field is toric, we use the function \texttt{isToricDegeneration}, which checks if $\init_{\widehat w}(\MI) = \MI_L$. See Proposition~\ref{prop: toric degen ideals}.

\begin{example}\label{example: gr24 diag mf ideal}
   Let $D$ be the diagonal matching field for $\Gr(2,4)$. The Pl\"ucker ideal is a principal ideal generated by $f = P_{14} P_{23} - P_{13}P_{24} + P_{12}P_{34}$. Since $D$ is toric, the matching field ideal $\MI_D$ is generated by the initial form $\init_w(f) = P_{14}P_{23}-P_{13}P_{24}$. These ideals are constructed as follows.

\begin{lstlisting}[caption = {Matching field ideals}]
i1 : needsPackage "MatchingFields";
i2 : D = diagonalMatchingField(2, 4);
i3 : matchingFieldIdeal D
o3 = ideal(p   p    - p   p   )
            2,3 1,4    1,3 2,4

o3 : Ideal of QQ[p   ..p   , p   , p   , p   , p   ]
                  1,2   1,3   2,3   1,4   2,4   3,4

i4 : J = plueckerIdeal D
o4 = ideal(p   p    - p   p    + p   p   )
            2,3 1,4    1,3 2,4    1,2 3,4

o4 : Ideal of QQ[p   ..p   , p   , p   , p   , p   ]
                  1,2   1,3   2,3   1,4   2,4   3,4

i5 : ideal leadTerm(1, J) == matchingFieldIdeal D
o5 = true

i6 : isToricDegeneration D
o6 = true
\end{lstlisting}

\end{example}

It is possible test directly whether a matching field is toric with the \textit{SubalgebraBases} package \cite{burr2023subalgebrabases}, which allows us to compute the initial algebra of the Pl\"ucker algebra. The function \texttt{plueckerAlgebra} produces the Pl\"ucker algebra $A = \CC[\det(X_I)] \subseteq R$. We recall the function \texttt{sagbi}, from the package \textit{SubalgebraBases}, which produces an object whose generators are a (partial) SAGBI basis for the subalgebra. 

\begin{example} \label{example: gr24 subring}
    We continue with Example~\ref{example: gr24 diag mf ideal}. 
    Since $D$ is toric, the six Pl\"ucker forms $\det(X_I)$ form a SAGBI basis for the Pl\"ucker algebra $A$.

\begin{lstlisting}[caption = {Initial algebra of the Pl\"ucker algebra}]
i7 : S = plueckerAlgebra D
o7 = QQ[p_0..p_5], subring of QQ[x_(1,1)..x_(2,4)]
o7 : Subring
i8 : transpose gens S
o8 = {-2} | x_(1,1)x_(2,2)-x_(1,2)x_(2,1) |
     {-2} | x_(1,1)x_(2,3)-x_(1,3)x_(2,1) |
     {-2} | x_(1,2)x_(2,3)-x_(1,3)x_(2,2) |
     {-2} | x_(1,1)x_(2,4)-x_(1,4)x_(2,1) |
     {-2} | x_(1,2)x_(2,4)-x_(1,4)x_(2,2) |
     {-2} | x_(1,3)x_(2,4)-x_(1,4)x_(2,3) |
                            6                      1
o8 : Matrix (QQ[x   ..x   ])  <--- (QQ[x   ..x   ])
                 1,1   2,4              1,1   2,4

i9 : transpose gens sagbi S
o9 = {-2} | x_(1,1)x_(2,2)-x_(1,2)x_(2,1) |
     {-2} | x_(1,2)x_(2,3)-x_(1,3)x_(2,2) |
     {-2} | x_(1,1)x_(2,3)-x_(1,3)x_(2,1) |
     {-2} | x_(1,3)x_(2,4)-x_(1,4)x_(2,3) |
     {-2} | x_(1,2)x_(2,4)-x_(1,4)x_(2,2) |
     {-2} | x_(1,1)x_(2,4)-x_(1,4)x_(2,1) |
                            6                      1
o9 : Matrix (QQ[x   ..x   ])  <--- (QQ[x   ..x   ])
                 1,1   2,4              1,1   2,4
\end{lstlisting}

\end{example}

\begin{example}\label{example: gr36 hexagonal subring}
    Let $L$ be the matching field for $\Gr(3,6)$ induced by the weight matrix
    \[
    M = \begin{bmatrix}
        0 & 0 & 0 & 0 & 0 & 0 \\
        18 & 3 & 15 & 6 & 9 & 12 \\
        35 & 28 & 21 & 14 & 7 & 0 
    \end{bmatrix}.
    \]
    The matching field $L$ is not toric since it is an example of a \textit{hexagonal matching field} for $\Gr(3,6)$ \cite{mohammadi2019matching}. So, for any monomial order on $R = \CC[x_{i,j}]$ that refines the weight order $M$, any SAGBI basis for the Pl\"ucker algebra has more than $20$ generators. We perform these computations as follows.

\begin{lstlisting}[caption = {Initial algebra for a hexagonal matching field}]
i10 : M = matrix {{0,0,0,0,0,0},{18,3,15,6,9,12},{35,28,21,14,7,0}};
               3        6
o10 : Matrix ZZ  <--- ZZ
i11 : L = grMatchingField M;
i12 : T = plueckerAlgebra L;

i13 : numgens T
o13 = 20

i14 : numgens sagbi T
o14 = 21
\end{lstlisting}

\end{example}

\subsection{Polyhedra and other functions}\label{sec: other functions}

In this section we explain how to use the \textit{MatchingFields} package to compute: matching field polytopes and Newton-Okounkov bodies; matroidal subdivisions of hypersimplices arising from the Dressian; algebraic matroids of matching fields, which decompose the algebraic matroid of the Grassmannian; and tope fields and their amalgamations. The matching field polytopes and Newton-Okounkov bodies can be computed for both Grassmannians and flag matching fields. However, the other constructions are for Grassmannian matching fields only. In each of the following parts, we provide the necessary background and explain how to perform the computations using the package.










\medskip
\noindent \textbf{Polyhedra.}
We construct matching field polytopes and Newton-Okounkov bodies, described in Section~\ref{sec: bg polytopes}, with the functions \texttt{matchingFieldPolytope} and \texttt{NOBody} respectively. The function \texttt{NOBody} uses the \textit{SubalgebraBases} package to compute a SAGBI basis for the Pl\"ucker algebra.

\begin{example}
    Consider the hexagonal matching field $L$ induced by the weight matrix $M$ from Example~\ref{example: gr36 hexagonal subring}. Let $\prec$ be the monomial order obtained by refining the weight order $M$ by the graded reverse lexicographic order with respect to
    \[
    x_{1,1} > x_{1,2} > \dots > x_{1,6} > x_{2,1} > x_{2,2} > \dots > x_{3,6}.
    \]
    Let $E = E(A, \prec)$ be the semigroup of the initial algebra $\init_\prec(A)$. See Section~\ref{sec: bg polytopes}.
    Since $L$ is not toric, the matching field polytope $P = P_L$ is a strict subset of a Newton-Okounkov body $Q = \Delta(E)$. We compute $P$ and $Q$, their normalised volumes, and show that $P \subseteq Q$ using the package as follows.






\begin{lstlisting}[caption={Polyhedra associated $L$. The output \texttt{o4} and $\texttt{o7}$ have been trimmed for brevity. The output \texttt{o9} shows that the vertices of the Newton-Okounkov body $Q$ are the vertices of the matching field polytope $P$ together with one more vertex, which comes from a degree-$2$ generator of $\init_\prec(A)$.}]
i1 : needsPackage "MatchingFields";
i2 : L = grMatchingField matrix {{0,0,0,0,0,0},{18,3,15,6,9,12},{35,28,21,14,7,0}};

i3 : P = matchingFieldPolytope L
o3 = P
o3 : Polyhedron

i4 : vertices P
o4 = | 1 1 1 0 1 0 0 1 1 0 1 0 0 0 1 1 0 0 1 0 |
     ...
     | 0 0 0 0 0 0 0 0 0 0 1 1 1 1 1 1 1 1 1 1 |
              18        20
o4 : Matrix QQ   <--- QQ

i5 : (volume P) * (dim P)!
o5 = 38
o5 : QQ

i6 : Q = NOBody L
o6 = Q
o6 : Polyhedron

i7 : vertices Q
o7 = | 1 1 1 0 1 0 0 1 1 0 1 0 0 0 1 1 0 0 1 0 1/2 |
     ...
     | 0 0 0 0 0 0 0 0 0 0 1 1 1 1 1 1 1 1 1 1 1/2 |
              18        21
o7 : Matrix QQ   <--- QQ

i8 : (volume Q) * (dim Q)!
o8 = 42
o8 : QQ

i9 : (vertices Q)_{0 .. 19} == vertices P
o9 = true
\end{lstlisting}

\end{example}

\medskip

\noindent \textbf{Dressians.}
The Dressian $\Dr(k, n)$ \cite{herrmann2009draw} is the intersection of the tropical hypersurfaces defined by the $3$-term Pl\"ucker relations. A \textit{tropical polynomial} $\trop(f)$ is a piecewise-linear convex function defined over $\overline \RR := \RR \cup \{\infty\}$ obtained from a polynomial $f \in \RR[x_1, \dots, x_n]$ by replacing addition with minimum and multiplication with addition. So $\trop(f)$ is evaluated as the minimum of a set of linear forms. We call each such linear form a \textit{tropical monomial} of $\trop(f)$. Given a tropical polynomial $\trop(f) : {\overline{\RR}}^n \rightarrow \overline{\RR}$, its \textit{tropical hypersurface} $\MT(\trop(f)) \subseteq {\overline \RR}^n$ is the set of points where the minimum in $\trop(f)$ is attained by at least two tropical monomials. The Dressian is given explicitly by
\[
\Dr(k, n) := \bigcap_{(I, a, b, c, d)}
\MT(
\min(
P_{I \cup \{a, b\}} + P_{I \cup\{c, d \}}, \ 
P_{I \cup \{a, c\}} + P_{I \cup\{b, d \}}, \ 
P_{I \cup \{a, d\}} + P_{I \cup\{b, c \}}
))
\subseteq \overline{\RR}^{\binom nk}
\]
where the intersection is taken over all $(k-2)$-subsets $I \subseteq [n]$ and all $a < b < c < d$ in $[n] \backslash I$. On the other hand, the \textit{tropical Grassmannian} $\Trop(\Gr(k,n)) = \bigcap_{f \in \MI} \MT(\trop(f))$ is the intersection of all tropical hypersurfaces where $f$ runes over every element of the Pl\"ucker ideal $\MI$.
When $k = 2$, the Dressian coincides with the tropical Grassmannian. For $k \ge 3$ and $n \ge 6$, the Dressian strictly contains the tropical Grassmannian. 

The Dressian admits serveral combinatorial descriptions. We focus on the description in terms of \textit{matroidal subdivisions of hypersimplices}. The \textit{hypersimplex} $\Delta_{k,n} \subseteq \RR^n$ is the convex hull of the characteristic vectors of the $k$-subsets of $[n]$. Suppose that $\widehat w \in \RR^{\binom nk}$ is any weight vector. We say that the regular subdivision of $\Delta_{k,n}$ given by $\widehat w$ is \textit{matroidal} if, for each maximal cell of the subdivision, the sets indexing the vertices of the cell are the bases of a matroid. The set of weights that give matroidal subdivions of the hypersimplex $\Delta_{k,n}$ are exactly the points of the Dressian $\Dr(k,n)$ \cite[Proposition~2.2]{speyer2008tropicallinear}.

In the package \textit{MatchingFields}, given a coherent matching field $L$ induced by a weight matrix $w$, its induced weight vector is displayed with the function \texttt{getWeightPluecker}. The subsets associated to the coordinates are listed in reverse-lexicographic order, which coincides with the order of the tuples displayed with \texttt{getTuples}. See Section~\ref{sec: defining MFs} and Example~\ref{example: diag mf construction}. The matroidal subdivision obtained from the induced weight vector $\widehat w$ is computed with the function \texttt{matroidSubdivision}. The output is a list $\{\MB_1, \MB_2, \dots, \MB_s\}$ where $\MB_i$ is the list of bases for the $i$th cell of the subdivision.

\begin{example}
    Let $L$ be the matching field for $\Gr(3,5)$ induced by the weight matrix
    \[
    w = 
    \begin{bmatrix}
        0 & 0 & 0 & 0 & 0 \\
        1 & 3 & 2 & 5 & 4 \\
        10 & 0 & 20 & 40 & 30 
    \end{bmatrix}.
    \]
    The matroidal subdivision of $\Delta_{3,5}$ with respect to the induced weight vector $\widehat w$ has $3$ maximal cells, which are computed as follows.
    
\begin{lstlisting}[caption = {Matroidal subdivision of the hypersimplex $\Delta_{3,5}$}]
i10 : L = grMatchingField matrix {{0,0,0,0,0},{1,3,2,5,4},{10,0,20,40,30}};

i11 : getWeightPluecker L
o11 = {1, 1, 12, 2, 1, 12, 2, 14, 4, 24}
o11 : List

i12 : netList matroidSubdivision L
      +---------+---------+---------+---------+---------+---------+---------+---------+
o12 = |{1, 2, 3}|{1, 2, 4}|{1, 2, 5}|{2, 3, 4}|{2, 3, 5}|{1, 3, 4}|{1, 3, 5}|         |
      +---------+---------+---------+---------+---------+---------+---------+---------+
      |{1, 2, 4}|{1, 2, 5}|{2, 3, 4}|{2, 3, 5}|{2, 4, 5}|{1, 3, 4}|{1, 3, 5}|{1, 4, 5}|
      +---------+---------+---------+---------+---------+---------+---------+---------+
      |{2, 3, 4}|{2, 3, 5}|{2, 4, 5}|{1, 3, 4}|{1, 3, 5}|{1, 4, 5}|{3, 4, 5}|         |
      +---------+---------+---------+---------+---------+---------+---------+---------+
\end{lstlisting}

\end{example}

\medskip

\noindent \textbf{Algebraic matroids.} Let $V \subseteq K^n$ be an irreducible affine algebraic variety. The \textit{algebraic matroid} $M_V$ of $V$ is the matroid whose independent sets are the subsets $S \subseteq [n]$ such that the image of $V$ under the coordinate projection $\pi : K^n \rightarrow K^S$ is full-dimensional. If $V$ is not contained in any coordinate hyperplane, i.e., its ideal is monomial free, then, by \cite{yu2017algebraic}, the algebraic matroid is preserved under tropicalisation.

Consider the case of the cone over the Grassmannian $\Gr(2, n)$, which is an affine variety in $\CC^{\binom n2}$. By \cite{bernstein2017completion}, the algebraic matroid of $\Gr(2,n)$ is fully determined by the maximal cones of $\Trop(\Gr(2,n))$ whose associated metric trees \cite{speyer2004tropical} are caterpillar graphs. It is a straightforward observation that the caterpillar graph cones are exactly the cones associated to coherent matching fields.

More generally, consider the cone over the Grassmannian $\Gr(k,n)$. Fix a coherent matching field $L$ induced by a weight matrix $w$ that is toric for the Grassmannian. Let $V \subseteq \RR^{\binom nk}$ be the linear span of the cone of the Gr\"obner fan of the Pl\"ucker ideal $\MI$ containing $\widehat w$ within its relative interior. The \textit{algebraic matroid} $M_L$ of $L$ is the matroid on the ground set of $k$-subsets of $[n]$ realised by $V$. By \cite[Lemma~2]{yu2017algebraic}, it follows that each basis of $M_L$ is a basis of $M_{\Gr(k,n)}$. For the reverse direction, note that not all maximal cones of the tropical Grassmannian arise from matching fields. For example, in $\Gr(2,n)$ the non-caterpillar graphs index precisely these cones. However, for all small examples that can be currently be computed, we can verify that the algebraic matroids of matching fields are enough to construct the algebraic matroid of the Grassmannian.

\begin{conjecture}\label{conj: matching fields algebraic matroids}
    Every basis of the algebraic matroid of the Grassmannian is a basis of $M_L$ for some coherent matching field $L$, i.e.,
    \[
    \MB(M_{\Gr(k,n)}) = \bigcup_{L \text{ coherent}} \MB(M_L).
    \]
\end{conjecture}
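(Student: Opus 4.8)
The inclusion $\bigcup_{L}\MB(M_L)\subseteq\MB(M_{\Gr(k,n)})$ is already explained in the discussion preceding the conjecture (via \cite[Lemma~2]{yu2017algebraic}), so only the reverse inclusion requires an argument. The plan is to first reduce the statement to a combinatorial claim about the maximal cones of the tropical Grassmannian, and then to prove that claim by steering an arbitrary maximal cone towards a matching-field cone without destroying a prescribed basis.

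\emph{Stage 1: reduction to maximal cones.} Since $\MI$ is prime and monomial-free, \cite{yu2017algebraic} identifies $M_{\Gr(k,n)}$ with the algebraic matroid of $\Trop(\Gr(k,n))$ and, crucially, shows that tropicalisation commutes with coordinate projections. Let $d=k(n-k)+1$ be the dimension of the cone over $\Gr(k,n)$. As $\Gr(k,n)$ is irreducible, $\Trop(\Gr(k,n))$ is a pure $d$-dimensional fan, so for any $S$ with $|S|=d$ the projection $\pi_S\bigl(\Trop(\Gr(k,n))\bigr)$ is $d$-dimensional if and only if $\pi_S$ restricts to a linear isomorphism on $\operatorname{span}(\sigma)$ for some maximal cone $\sigma$. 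Combining this with the fact that tropicalisation commutes with $\pi_S$ gives
\[
\MB\bigl(M_{\Gr(k,n)}\bigr)\;=\;\bigcup_{\sigma}\MB(M_\sigma),
\]
the union over all maximal cones $\sigma$ of $\Trop(\Gr(k,n))$, where $M_\sigma$ is the linear matroid realised by $\operatorname{span}(\sigma)$. The matching-field cones $\sigma_L$ form a distinguished subfamily of these maximal cones, so the conjecture is equivalent to the assertion that for every maximal cone $\sigma$ and every $B\in\MB(M_\sigma)$ there is a matching-field cone $\sigma_L$ with $B\in\MB(M_{\sigma_L})$.

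\emph{Stage 2: steering to a matching-field cone.} Fix a maximal cone $\sigma$ and $B\in\MB(M_\sigma)$. Adjacent maximal cones of $\Trop(\Gr(k,n))$ are related by a local \emph{flip} (for $k=2$ a nearest-neighbour interchange of the associated trivalent tree \cite{speyer2004tropical}), which exchanges one short circuit of the realised matroid for another circuit supported on an overlapping set of Pl\"ucker coordinates; every other circuit, and hence the status of $B$ as a basis, is affected only if $B$ meets the newly introduced circuit. The key claim to establish is that from $\sigma$ one can reach some matching-field cone through a chain of flips $\sigma=\sigma_0,\sigma_1,\dots,\sigma_r=\sigma_L$ along which $B$ stays a basis, that is $B\in\MB(M_{\sigma_i})$ for all $i$. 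This needs (a) a workable criterion, in terms of the circuit being introduced, for when a flip preserves a given basis, and (b) a reachability statement: that matching-field cones lie ``densely enough'' in the dual graph of maximal cones to be reached while routing around the circuits containing $B$. For $k=2$, where matching-field cones are exactly the caterpillar trees, the desired conclusion is equivalent to Bernstein's description of the completion rank of rank-$2$ matrices \cite{bernstein2017completion}, and one would hope the general argument specialises to it; the cases $\Gr(3,6)$, $\Gr(3,7)$, $\Gr(3,8)$ are checkable by the package and provide further evidence and test data.

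The main obstacle is Stage 2: understanding how the realised matroid $M_\sigma$ changes as $\sigma$ ranges over maximal cones, and proving that the matching-field cones are always reachable by flips that never destroy the chosen basis. A possible fallback avoids the Gr\"obner fan altogether: first obtain a direct combinatorial (matching- or transversal-type) characterisation of $\MB(M_{\Gr(k,n)})$, and then, given a family $S$ satisfying it, construct by hand a weight matrix whose induced coherent matching field is toric and has $S$ as a basis of its matroid --- paralleling the correspondence between coherent matching fields and caterpillar trees in the case $k=2$.
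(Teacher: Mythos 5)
This statement is a \emph{conjecture} in the paper: no proof is given there, only computational evidence for small cases ($\Gr(2,n)$ via Bernstein's caterpillar-tree result, and the Grassmannians whose tropicalisations can be computed explicitly). So there is no paper proof to compare against, and your text should be judged as a proposed resolution of an open problem. As written, it is not a proof. Stage~1 is essentially the known framing (Yu's result that the algebraic matroid is preserved under tropicalisation, plus the observation that a full-dimensional coordinate projection of a pure fan must be full-dimensional on the span of some maximal cone), and the inclusion $\bigcup_L \MB(M_L)\subseteq\MB(M_{\Gr(k,n)})$ is already in the paper's discussion. The entire content of the conjecture is your Stage~2, and there you only state what would need to be established: a criterion for when a ``flip'' between adjacent maximal cones of $\Trop(\Gr(k,n))$ preserves a chosen basis of the realised linear matroid, and a reachability statement that matching-field cones can always be reached by such basis-preserving flips. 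Neither is proved, and both are genuinely hard: for $k\ge 3$ the structure of the dual graph of maximal cones is not understood, adjacent maximal cones need not be related by anything as controlled as the nearest-neighbour interchanges of the $k=2$ tree picture, not every maximal cone of $\Trop(\Gr(k,n))$ is prime or realises its matroid in a way that changes by a single circuit exchange across a codimension-one wall, and you give no argument that a routing avoiding the circuits through $B$ exists. Even your claim that the $k=2$ case of the scheme ``is equivalent to'' Bernstein's theorem is asserted rather than argued.

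Two further cautions. First, the matroid $M_L$ is defined in the paper only for coherent matching fields that are \emph{toric} (so that $\widehat w$ lies in the relative interior of a maximal prime cone of the Gr\"obner fan meeting $\Trop(\Gr(k,n))$); your Stage~1 silently treats every matching-field cone as a maximal cone of the tropical Grassmannian, which is exactly the toric hypothesis, so any eventual argument must either restrict to toric $L$ or explain what $M_L$ means otherwise. Second, your identification $\MB(M_{\Gr(k,n)})=\bigcup_\sigma\MB(M_\sigma)$ over maximal cones needs the fan to be pure of dimension $d$ and needs that a basis of the algebraic matroid is witnessed by a single maximal cone; this is fine, but it should be stated with the correctness of the fan structure (trivial valuation, cone over the Grassmannian) made explicit. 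In short: the proposal is a reasonable research plan, and the fallback you mention (a direct combinatorial characterisation of $\MB(M_{\Gr(k,n)})$ plus an explicit construction of a toric matching field realising a given basis) is arguably closer to how the $k=2$ case was actually settled, but the conjecture remains open and your Stage~2 is the missing proof, not a step in one.
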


The algebraic matroid of $L$ is computed using the function \texttt{algebraicMatroid}. The object returned by this function uses the ground set $\{0,1,\dots, \binom nk-1\}$. To view the circuits and bases of the algebraic matroid in terms of their $k$-subsets, we use the functions \texttt{algebraicMatroidCircuits} and \texttt{algebraicMatroidBases} respectively.

\begin{example}\label{example: gr26 alg matroid}
The ground set of the algebraic matroid $M_{\Gr(2,6)}$ is the edge set $E(K_6)$ of the complete graph $K_6$. We say that a cycle with labelled vertices $v_1, v_2, \dots, v_m$ is \textit{alternating} if $v_1 < v_2$, $v_2 > v_3$, $v_3 < v_4$, \dots, $v_{m-1} < v_m$, and $v_m > v_1$. The independent sets of $M_{\Gr(2,n)}$ are the subgraphs $H \subseteq K_6$ for which there exists a labelling of the vertices such that $H$ does not contain an alternating cycle \cite[Theorem~4.4]{bernstein2017completion}.

\begin{figure}
    \centering
    \includegraphics[scale=0.8]{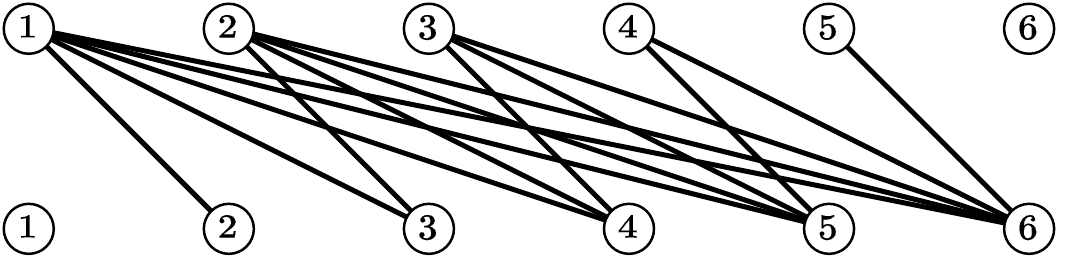}
    \caption{A graph whose graphic matroid coincides with $M_L$ from Example~\ref{example: gr26 alg matroid}. The edges are labelled with the set of incident vertices.}
    \label{fig: graph of diag mf matroid}
\end{figure}

Let $L$ be the diagonal matching field for $\Gr(2,6)$. The algebraic matroid $M_L$ is realised by the vertices of the matching field polytope $P_L$. It is straightforward to show that $M_L$ is the graphic matroid of the bipartite graph in Figure~\ref{fig: graph of diag mf matroid}. The graph has $3$ connected components and $12$ vertices, so $M_L$ has rank $9$. Its circuits are the cycles of the graph. We construct the matroid in \textit{Macaulay2}, show that it has $576$ bases, and display seven of its circuits as follows.

\begin{lstlisting}[caption = {Algebraic matroids of matching fields}]
i13 : L = diagonalMatchingField(2, 6);

i14 : algebraicMatroid L
o14 = a "matroid" of rank 9 on 15 elements
o14 : Matroid

i15 : #algebraicMatroidBases L
o15 = 576

i16 : netList (algebraicMatroidCircuits L)_{0 .. 6}
      +----------------------------------------------------+
o16 = |set {{1, 3}, {1, 4}, {2, 3}, {2, 4}}                |
      +----------------------------------------------------+
      |set {{1, 3}, {1, 5}, {2, 3}, {2, 5}}                |
      +----------------------------------------------------+
      |set {{1, 4}, {1, 5}, {2, 4}, {2, 5}}                |
      +----------------------------------------------------+
      |set {{1, 4}, {1, 5}, {3, 4}, {3, 5}}                |
      +----------------------------------------------------+
      |set {{1, 3}, {1, 5}, {2, 3}, {2, 4}, {3, 4}, {3, 5}}|
      +----------------------------------------------------+
      |set {{1, 3}, {1, 4}, {2, 3}, {2, 5}, {3, 4}, {3, 5}}|
      +----------------------------------------------------+
      |set {{2, 4}, {2, 5}, {3, 4}, {3, 5}}                |
      +----------------------------------------------------+
\end{lstlisting}

\end{example}

\medskip

\noindent
\textbf{Tope fields.}
A tope field is a generalisation of a matching field. Below we give a concise introduction to tope fields, however, a thorough exposition can be found in \cite{loho2020matching}. Our setup is modified so that it aligns with the implementation in the \textit{MatchingFields} package. A \textit{tope field} for $\Gr(k,n)$ of type $t = (t_1, \dots, t_s)$ where $k = \sum_i t_i$, is a collection of bipartite graphs called \textit{topes} on the vertices $(\ML :=[n]) \sqcup (\MR := [s])$ such that the following hold: the collection has one bipartite graph $G$ for each $k$-subset $I \subseteq [n]$; the degree vector of the vertices in $\ML$, called the \textit{left-degree vector}, is equal to the characteristic vector of $I$; and the degree vector of the vertices in $\MR$, called the \textit{right-degree vector}, is equal to the type $t$. The tope fields associated to matching fields are the tope fields of type $(1,1,\dots, 1)$. In such a case, the tuple $(i_1, \dots, i_k)$ of a matching field corresponds to the bipartite graph with edges $(i_j, j)$ for each $j \in [k] =: \MR$.

We say that a tope field is \textit{linkage} if, for each $(k+1)$-subset $S \subseteq [n]$, the bipartite graph on $L \sqcup R$ whose edges are the union of all bipartite graphs of the tope field whose left-degree vector is supported on $S$ is a forest. See \cite[Definition~3.1]{loho2020matching}.

We encode a tope field as a pair $(L, t)$ where $L$ is a Grassmannian matching field and $t = (t_1, \dots, t_s)$ is the type. Let $T = (i_{1,1}, i_{1,2}, \dots, i_{1,t_1}, i_{2,1}, i_{2,2}, \dots, i_{s, t_s})$ be a tuple of $L$. The bipartite graph corresponding to $T$ has edges $(i_{a,b}, a)$ for each $a \in [s]$ and $b \in [t_a]$. A tope field is defined from a matching field with the function \texttt{topeField}. To check that the tope field is \textit{linkage}, we use the function \texttt{isLinkage}. Given a linkage tope field $T = (L, t)$, for each $i \in [s]$, the $i$th \textit{amalagamation} of $T$ is a certain linkage tope field $(L', t + e_i)$ where $e_i$ is the $i$th standard basis vector and $L'$ is a Grassmannian matching field for $\Gr(k+1, n)$.

\begin{example}
    Let $L$ be the matching field for $\Gr(3,5)$ with tuples
    \[
    132, 142, 152, 341, 135, 145, 342, 235, 245 \text{ and } 345.
    \]
    The bipartite graphs associated to the tuples $132, 142, 341$ and $342$ are shown in Figure~\ref{fig: tope field}. This matching field is linkage so the union of these graphs is a forest. We verify this using the \textit{MatchingFields} package and compute the amalgamations as follows.

\begin{lstlisting}[caption = {Tope fields and amalgamations}]
i17 : L = grMatchingField(3, 5, {{1,3,2}, {1,4,2}, {1,5,2}, {3,4,1}, {1,3,5}, {1,4,5}, {3,4,2}, {2,3,5}, {2,4,5}, {3,4,5}});

i17 : T = topeField L
o18 = Tope field: n = 5 and type = {1, 1, 1}
o18 : TopeField

i19 : isLinkage T
o19 = true

i20 : T2 = amalgamation(2, T)
o20 = Tope field: n = 5 and type = {1, 2, 1}
o20 : TopeField

i21 : getTuples T2
o21 = {{1, 3, 4, 2}, {1, 3, 5, 2}, {1, 4, 5, 2}, {1, 3, 4, 5}, {2, 3, 4, 5}}
o21 : List

i22 : T23 = amalgamation(3, T2)
o22 = Tope field: n = 5 and type = {1, 2, 2}
o22 : TopeField

i23 : getTuples T23
o23 = {{1, 3, 4, 2, 5}}
o23 : List
\end{lstlisting}

\begin{figure}
    \centering
    \includegraphics[scale=0.8]{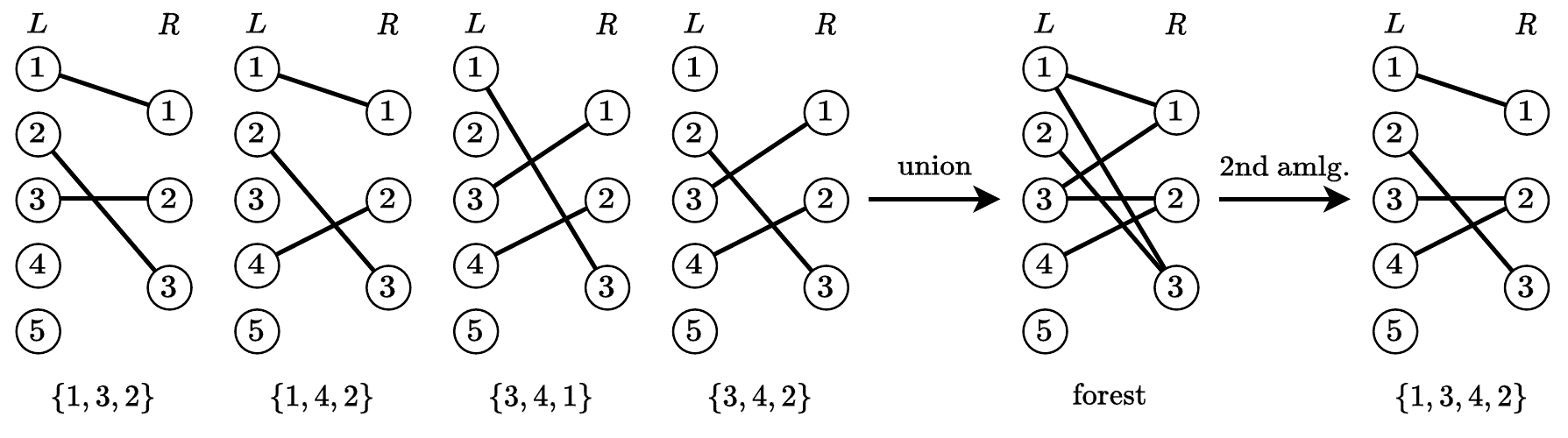}
    \caption{Left to right: four topes from $L$ whose left-degree vector is supported on $1234$; the union of their edges is a forest; the tope of the second amalgamation that is the subgraph of the forest with left-degree vector $(1,1,1,1,0)$ and right-degree vector $(1,2,1)$.}
    \label{fig: tope field}
\end{figure}

\end{example}

\begin{remark}\label{rmk: free res from tope amalgamations}
    Given a matching field $L$, it is conjectured that the collection of all sequences of amalgamations of $L$ contain the data necessary to write down the minimal free resolution of the matching field ideal $\MI_L$. In particular, the set of amalgamations are conjectured to give a combinatorial characterisation of the toric property.
\end{remark}

\bibliographystyle{plain}
\bibliography{references}

\bigskip

\noindent 
{\sc Oliver Clarke} \newline
{\it Email:} oliver.clarke@ed.ac.uk \newline
{\it Address:} School of Mathematics, University of Edinburgh, United Kingdom

\end{document}